\flushbottom\addtolength{\oddsidemargin}{-1.9cm}
\newcommand{\bn}{{\mathbf N}}
\newcommand{\bc}{{\mathbb C}}
\newcommand{\bz}{{\mathbb Z}}
\newcommand{\mi}{{\mathcal{I}}}
\newcommand{\mc}{{\mathcal{C}}}
\newcommand{\mj}{{\mathcal{J}}}
\newcommand{\la}{{\lambda}}
\newtheorem{Theorem}{Theorem} [section]
\newtheorem{Lemma}[Theorem]{Lemma}
\newtheorem{Proposition}[Theorem]{Proposition}
\newtheorem{Remark}[Theorem]{Remark}
\newtheorem*{thmA}{Theorem A}
\newsavebox{\savepar}
\title{On the connectivity of the escaping set for complex exponential Misiurewicz parameters}
\author{{\small Xavier Jarque \thanks{This work has been done while I am on leave from the Departament de Matem\`atica Aplicada i An\`alisi at Universitat de Barcelona. The author is partially supported by grants 2009SGR--792,  MTM2006--05849 and
MTM--2008--01486 Consolider (including a FEDER
contribution)}}\\
{\small Departament d'Enginyeria Inform\`atica i Matem\`atiques} \\ 
{\small Universitat Rovira i Virgili} \\
{\small Avinguda Pa\"isos Catalans, 26} \\
{\small 43007 Tarragona, Catalunya, Spain} \\
{\small E-mail: xavier.jarque@ub.edu}}
\date{\today}
\begin{document}
\maketitle

\abstract{Let $E_{\la}(z)=\la {\rm exp}(z), \ \lambda\in \mathbb C$ be the complex exponential family. For all functions in the family there is a unique asymptotic value at $0$ (and no critical values). For a fixed $\la$, the set of points in $\mathbb C$ with orbit tending to infinity is called the escaping set. 
We prove that the escaping set of $E_{\la}$ with $\la$ Misiurewicz (that is, a parameter for which the orbit of the singular value is strictly preperiodic) is a connected set. }

\section{Introduction}\label{section:intro}

For an entire function $f:\bc\to\bc$ there are two types of points into which some  branches of $f^{-1}$ cannot be continued analytically; these are critical values and  asymptotic values. A {\it critical value} is the image of a critical point (a zero of the derivative of $f$), and $z_{0}\in\bc$ is an {\it asymptotic value} if there is a curve $\alpha(t)$ satisfying  $|\alpha(t)| \to \infty$ and $f\left(\alpha(t)\right)\to z_{0}$, as $t\to \infty$. The closure of the union of critical and asymptotic values is called the set of {\it singular values}.  Singular values are known to play an important role in determining the global dynamics  associated with the iterates of the map; see \cite{CG} (Chapter III) or \cite{Be} (Theorem 7).

Let $(E_{\la})_{\la\in\bc}$ be the complex exponential family, i.e., $E_{\la}(z)=\la  {\rm exp}(z),\ \la \in\bc$. Each complex exponential map is a transcendental entire map with a unique asymptotic value at $z=0$ and no critical values. So, the structure and topology of the Fatou and Julia sets in the dynamical plane strongly depend on the asymptotic behavior of the iterates of the  unique singular value at $z=0$. For this reason $E_{\la}$ is considered the transcendental entire  version of the well--known quadratic polynomial family, $Q_{c}(z)=z^2+c,\ c\in\mathbb C$,  which has, for each $c$,  a unique critical value at $z=c$ (and, of course, no asymptotic values) which determines the structure and topology of the Fatou and Julia sets in the dynamical plane.

However, in contrast to the polynomial case, where all points that tend  to infinity under iteration belong to the basin of attraction of $z=\infty$, and so belong to the Fatou set, the existence of an essential singularity at infinity and a unique finite singular value implies that all points that tend  to infinity under iteration, known as the {\it escaping set} 
$$
\mi\left(E_{\lambda}\right)=\{z\in\bc \ | \  E_{\lambda}^n(z) \to \infty  \},
$$
belong to the Julia set. Precisely it turns out that  $\mj\left(E_{\lambda}\right)=\overline{\mi\left(E_{\lambda}\right)}$; see \cite{EL}.  Notice that for any exponential map the vertical line Re$(z)=\rho$ is sent to the circle of radius $|\la|e^\rho$, so any point $z\in\bc$ whose orbit escapes to infinity  satisfies Re$\left(E_{\la}^n(z)\right) \to \infty$. The direction given by the positive real axis (no restriction on the imaginary part)  is called the {\it asymptotic (escaping) direction}.

As we said, the topology of the Julia and Fatou sets of $E_{\la}$ depend on $\lambda$, as does the dynamical behavior of its orbit. So, for instance, when $\lambda\in(0,1/e)$ the Fatou set is given by a unique, totally invariant immediate basin of attraction, and the Julia set is  a pairwise disjoint union of infinitely  many curves  homeomorphic to $[0,\infty)$ that extend to infinity in the asymptotic direction. 
Each curve has a  unique distinguished point, called {\it endpoint} or {\it landing point}, that does not necessarily belong to the escaping set. For instance for certain curves their endpoints are periodic or preperiodic points, and so have a bounded orbit. See \cite{Ba,B,R1,SZ}. 

In contrast, if the orbit of $0$ escapes (for instance $\lambda>1/e$), or if the orbit of $0$ is  strictly preperiodic  (these $\la$ parameters are usually called {\it Misiurewicz} parameters) the Julia set is the whole plane. 
In this case, the escaping set  also contains (although is not) the curves described  above (see \cite{DJ,DJM,R2,SZ} and references therein for  precise results). 

In 1986  Eremenko  conjectured \cite{E}  that,  for any entire map $f$, all connected components of $\mi(f)$  are unbounded (he had already proven that all connected components of $\overline{\mi(f)}$ are unbounded). 
This conjecture has  recently been proven for a certain class of entire maps, including the exponential family,  but it is false in general (see  \cite{R3S} and \cite{SZ}). Moreover, from the conjecture itself and all later (positive and negative) results, there has been a natural interest in  studying  the topology of the connected components of the escaping set, inclusively for maps of the exponential family.

It is easy to argue that for hyperbolic parameters of the exponential family, like $\lambda\in(0,1/e)$, each of the  above--mentioned curves is a distinct connected component of $\mi(E_{\lambda})$. Recently L. Rempe \cite{R} used the construction of Devaney's indecomposable continua \cite{D} to show that for $\lambda > 1/e$ the escaping set is a connected set of the plane. So,  the infinitely many pairwise disjoint curves that extend to infinity  are just a subset of a unique connected component. 

These results seemed to show that having  a singular value(s) in the Julia set made a crucial difference in the connectedness of the escaping set. However,  H. Mihaljevi\'{c}-Brandt \cite{M-B} proved that, for instance, for any Misiurewicz parameter (a parameter for which the two critical values are preperiodic) in the sine family, $S_{\lambda}(z)=\lambda \sin z$, the escaping set is not a connected subset of the plane. She obtained this result as a corollary of  a much more general theorem on the  conjugacy of quasiconformally equivalent maps (see also \cite{R3}). We note that the sine family has no finite asymptotic values but two critical values given by $\pm \lambda$. Of course, for Misiurewicz parameters we have that the Julia set is the whole plane, so both critical values belong to the Julia set.

A  natural and interesting question in this setting, therefore, is  to study the connectivity of the escaping set for Misiurewicz parameters of the exponential family. In this paper we show that, unlike  Misiurewicz parameters in the sine (or cosine) family, the escaping set is connected. 

\begin{thmA}
Let $\lambda$ be a Misiurewicz parameter. Then
$\mi(E_{\la})$ is a connected subset of the plane.
\end{thmA}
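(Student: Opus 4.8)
The plan is to combine the dynamic-ray (hair) description of the escaping set with the structural fact that, because the singular value is an \emph{asymptotic} value and not a critical value, $E_{\la}$ is an \emph{unramified} universal covering $\bc\to\bc\setminus\{0\}$; this covering structure is precisely the feature that is absent in the ramified sine family, and it is what I expect to force connectivity. First I would recall the description of $\mi(E_{\la})$ for a Misiurewicz (hence subhyperbolic) parameter: every escaping point lies on a dynamic ray $g_{\underline s}$, a curve tending to $\infty$ in the asymptotic direction, indexed by an external address $\underline s=s_0s_1s_2\cdots$ recording the itinerary with respect to the horizontal strips $\{(2k-1)\pi<\mathrm{Im}(z)\le(2k+1)\pi\}$, so that $\mi(E_{\la})=\bigcup_{\underline s}g_{\underline s}$. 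Since $0$ is strictly preperiodic we have $0\notin\mi(E_{\la})$, the escaping set is totally invariant, $E_{\la}^{-1}(\mi(E_{\la}))=\mi(E_{\la})$, and the deck transformation $\tau\colon z\mapsto z+\tupi$ preserves it. Consequently $E_{\la}$ restricts to a normal covering $\mi(E_{\la})\to\mi(E_{\la})$ with deck group $\langle\tau\rangle\cong\bz$, and it shifts addresses, $E_{\la}(g_{s_0s_1\cdots})=g_{s_1s_2\cdots}$.

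The topological engine is the elementary observation that if $S$ is connected and $q\in\overline S$, then $S\cup\{q\}$ is connected; hence whenever a ray $g_{\underline s}$ accumulates on an \emph{escaping} point $q$ lying on another ray $g_{\underline t}$, the two rays lie in the same connected component of $\mi(E_{\la})$. The proof therefore reduces to showing that the accumulation graph on the set of rays — join $g_{\underline s}\sim g_{\underline t}$ when one accumulates on an escaping point of the other — is connected. Equivalently, I would exhibit a connected set $Y\subseteq\mi(E_{\la})$ that \emph{winds once around the asymptotic value} $0$: for such a $Y$ the covering computation gives $\mathrm{im}(\pi_1(Y)\to\pi_1(\bc\setminus\{0\}))=\bz$, so the full preimage $E_{\la}^{-1}(Y)$ is again connected and fuses rays whose leading addresses differ by an arbitrary integer.

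One then propagates this through total invariance and the address shift. Choosing $Y$ to contain an escaping point together with its image guarantees $E_{\la}^{-1}(Y)\cap Y\neq\emptyset$, so $Y\cup E_{\la}^{-1}(Y)$ is connected; iterating, the connected sets $E_{\la}^{-n}(Y)$ overlap and enlarge the component of $Y$, and since each pullback spreads across all strips the growing component eventually meets every external-address class. This exhibits $\mi(E_{\la})$ as a single component. Here the contrast with the sine case is structural: a winding of $Y$ around the asymptotic value lifts to a genuine connection because $E_{\la}$ is unramified there, whereas at a critical value the ramified (locally two-to-one) lifting produces the pinchings that disconnect the escaping set.

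The hard part will be this central step, namely producing a genuinely connected winding subset of $\mi(E_{\la})$ — that is, controlling the accumulation of rays so that their limit points are themselves escaping rather than landing in the non-escaping part of $\mj(E_{\la})=\bc$. This is exactly where the preperiodicity of $0$ must enter: subhyperbolic expansion off the finite postsingular set, combined with the indecomposable-continuum phenomenon of Devaney and Rempe, should guarantee that suitable rays accumulate on one another's escaping points and spiral around $0$ (which lies in $\overline{\mi(E_{\la})}$ because the Julia set is all of $\bc$). I expect the bookkeeping of external addresses near the postsingular orbit, and the verification that the accumulation set of the chosen ray meets $\mi(E_{\la})$ in a subset that actually winds around $0$, to be the most delicate points of the argument.
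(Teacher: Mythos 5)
Your strategy is genuinely different from the paper's, but it has a real gap at its center. The paper argues by contradiction: if $\mi(E_{\la})$ were disconnected, unicoherence of $\bc$ would produce an unbounded \emph{connected} boundary $\mc$ consisting entirely of non-escaping points; symbolic dynamics with respect to the partition induced by the preimages of the hair landing at $0$ then forces $\mc$ and all its forward images to stay in single strips and to be unbounded to the right, and a compactness/pullback argument produces a point of $\mc$ whose real parts iterate to infinity --- an escaping point on $\mc$, contradiction. Your plan instead tries to directly fuse all rays via the covering structure, in the spirit of Rempe's argument for $\la>1/e$. The step you yourself flag as ``the hard part'' --- exhibiting a connected subset $Y\subseteq\mi(E_{\la})$ that winds around $0$ --- is precisely where the entire difficulty of the theorem lives, and for Misiurewicz parameters it is far from clear it can be done this way: unlike the case $\la>1/e$, here $0\notin\mi(E_{\la})$; the hair landing at $0$ need not spiral (the multiplier of the postsingular cycle can be real positive, in which case the hair lands without winding); and the accumulation sets of rays are typically indecomposable continua containing many \emph{non-escaping} points (since $\mj(E_{\la})=\bc$), so accumulation of one ray on another does not by itself happen at escaping points. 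There is also a technical issue in the covering computation: the coset description of components of $E_{\la}^{-1}(Y)$ requires $Y$ to be locally path-connected (or to actually contain a loop around $0$), which a connected subset of $\mi(E_{\la})$ need not be.

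The propagation step is also incomplete. Even granting a connected winding $Y$, the set $\bigcup_{n}E_{\la}^{-n}(Y)$ only contains escaping points whose external address is of the form $t_0\cdots t_{n-1}\underline{u}$ with $\underline{u}$ an address already realized in $Y$ and $t_i\in\bz$ arbitrary; pullbacks prepend symbols but never change the tail of the address. So ``the growing component eventually meets every external-address class'' does not follow, and you would need an extra argument (e.g.\ that every ray accumulates on this backward-invariant connected set at an escaping point, or that every escaping orbit eventually enters $Y$) to conclude. As written, the proposal is a plausible program whose two essential steps --- the construction of the winding set and the exhaustion of all addresses --- are both missing.
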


\noindent I would like to thank R. L. Devaney and L. Rempe for their helpful comments. I am also grateful to  the hospitality of the  Mathematics Department at Boston University for their hospitality during the preparation of this paper. Finally I  also wish to thank the Spanish {\it Ministerio de Ciencia y Innovacion} for the financial support.

\section{Preliminaries}\label{section:preliminaries}

Before proving  the connectedness of the escaping set for Misiurewicz parameters of the exponential family we need to establish  some terminology and state some previous results that we will  use in the next section.

We say that a point $z_{0}\in  \mj\left(E_{\lambda}\right)$ is {\it accessible} if there exist a curve $\gamma:[0,\infty) \to \mj\left(E_{\lambda}\right)$ with $\gamma(0)=z_{0}$ and $\gamma(t)\in \mi\left(E_{\lambda}\right)$ for all $t>0$, and Re$\left (\gamma(t)\right)\to \infty$ as $t\to \infty$. In this context we say that such a curve $\gamma$ connects $z_0$ to $+\infty$.

\begin{Remark}
For points in the Julia set there is another standard notion of accessibility when such a point can be reached through a curve inside the Fatou set. Here we use accessibility in another sense, first introduced, as far as we know, in \cite{R2}.  
\end{Remark}

We emphasize that the key argument when proving the existence of a curve $\gamma$ in the definition of accessibility at a point in $\bc$, is to show that certain curves contained in the escaping set, called {\it tails}, defined far to the right in the dynamical plane and with certain associated certain symbolic dynamics can be extended by using a pull back argument (i.e. taking suitable preimages of the tails by the inverse branches of the exponential map). These extended curves are called {\it hairs} or {\it dynamical rays}. In some cases it is possible to show that the hairs {\it land} at a unique point  of the Julia set, so called {\it endpoint (of the hair}) which is the only point in $\gamma$ that does not need to be in the escaping set.  Notice that when the hair lands at a certain point, this point becomes accessible in the sense of the previous definition. Again a detailed proof of this construction can be found in \cite{B,R,SZ,SZ1}. 

The following result, which precisely reflects the above discussion, is a particular case  of Theorem 2.3 and Theorem 4.3 (or Theorem 6.4) in \cite{SZ1}. (See also Theorem 6.5 in \cite{SZ}).  Remember that $\la$ is a Misiurewicz parameter if the orbit of $0$ is strictly preperiodic and that in this case $\mj\left(E_{\lambda}\right) = \bc$.

\begin{Proposition}\label{prop:0-accessible}
Fix a  Misiurewicz parameter $\lambda\in \bc$. Then there is one preperiodic hair (or dynamic ray) landing at $0$, and so  the singular value is accessible (by a curve $\gamma$).  In particular $\gamma$  extends to $+\infty$  with an asymptotic constant imaginary part.  Finally, since $\gamma$  lands at $0$, there is a (initial) piece of $\gamma$ that belongs to an arbitrarily small neighborhood of $0$.  
\end{Proposition}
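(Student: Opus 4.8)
The plan is to produce the hair landing at $0$ by pulling back, along the strictly preperiodic orbit of the singular value, a \emph{periodic} hair landing at a repelling periodic point, and then to read off the asymptotics from the standard normal form of the dynamic rays of $E_{\la}$. I first record the structure forced by the Misiurewicz hypothesis: the orbit $0\mapsto\la\mapsto E_{\la}(\la)\mapsto\cdots$ is strictly preperiodic, so there are integers $k\ge 1$, $p\ge 1$ such that $a:=E_{\la}^{k}(0)$ is periodic of period $p$ while $E_{\la}^{k-1}(0)$ does not lie on this cycle. Since the singular orbit is finite and $E_{\la}$ carries no attracting or indifferent cycle in this case, the cycle of $a$ is repelling. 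Note that $0$ is itself a regular point of $E_{\la}$ (there are no critical points and $E_{\la}'(0)=\la\neq0$); the one peculiarity is that, being the omitted value, $0$ has no preimage in $\bc$ — a fact that will not obstruct the construction, since I only use that $0$ is a preimage of $\la$.

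Next I would invoke the dynamic-ray machinery. By the existence theorem for rays with exponentially bounded external address (Theorem 2.3 of \cite{SZ1}), every periodic address $\underline{s}$ carries a periodic dynamic ray, an injective $C^{\infty}$ curve contained in $\mi(E_{\la})$; and by the landing theorem for periodic rays (Theorem 4.3 / Theorem 6.4 of \cite{SZ1}, and Theorem 6.5 of \cite{SZ}), every repelling periodic point is the landing point of at least one periodic ray. Applying the latter to $a$ yields a periodic hair $\gamma_{a}$, with some periodic address $\underline{s}$, whose landing point is $a$.

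The core step is the pull-back. Since $E_{\la}:\bc\to\bc\setminus\{0\}$ is a covering map whose inverse branches are indexed by the horizontal strips $R_{j}=\{z:\ (2j-1)\pi<\operatorname{Im} z-\arg\la<(2j+1)\pi\}$, the image of a ray under a fixed inverse branch is again a ray, with one symbol prepended to its address. I would follow the backward orbit $a=E_{\la}^{k}(0)\leftarrow E_{\la}^{k-1}(0)\leftarrow\cdots\leftarrow\la\leftarrow 0$, at each stage taking the unique inverse branch sending the current orbit point to the previous one. The terminal pull-back is the delicate one: the preimages of $\la$ are exactly $\{2\pi i\,n:\ n\in\bz\}$, so I must use the branch attached to the strip $R_{0}$ in order that $\la\mapsto 0$. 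After these $k$ pull-backs the address has become strictly preperiodic, hence is still exponentially bounded, so the resulting curve $\gamma$ is a genuine preperiodic hair; and because pulling back respects landing points, $\gamma$ lands at $0$. This is precisely the accessibility of the singular value in the sense of the definition above.

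It remains to read off the asymptotics and to note the main difficulty. The dynamic rays of $E_{\la}$ have the normal form $g_{\underline{s}}(t)=t+2\pi i\,s_{0}+\arg\la+o(1)$ as $t\to+\infty$, where $s_{0}$ is the leading symbol; hence along $\gamma$ one has $\operatorname{Re}\gamma(t)\to+\infty$ while $\operatorname{Im}\gamma(t)$ tends to the constant $2\pi s_{0}+\arg\la$, which is exactly escape to $+\infty$ with asymptotically constant imaginary part. Parametrising $\gamma$ so that $\gamma(0)=0$, continuity of the landing curve gives $\gamma([0,\varepsilon))$ inside any prescribed neighbourhood of $0$ for small $\varepsilon$, which is the final claim. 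The genuinely hard input is the cited landing theorem: proving that a periodic ray accumulates at a single point, and that this point is the prescribed repelling $a$, rests on the hyperbolic-expansion (snail-lemma type) estimates of \cite{SZ1,SZ}. Granting that, the only work specific to the present statement is the bookkeeping of inverse branches — especially justifying the terminal pull-back into $R_{0}$ despite $0$ being the omitted value — and the elementary reading-off of the ray normal form.
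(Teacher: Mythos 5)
Your derivation is correct, but note that the paper does not actually prove this Proposition: it is stated as ``a particular case of Theorem 2.3 and Theorem 4.3 (or Theorem 6.4) in \cite{SZ1}'' (see also Theorem 6.5 in \cite{SZ} and the partial result in \cite{B}), so the comparison is between your sketch and a bare citation. What you supply is exactly the standard route by which the cited theorems yield the statement: the periodic cycle absorbed by the singular orbit is repelling, a periodic dynamic ray lands there, and $k$ pull-backs along the singular orbit --- the last one using the inverse branch sending $\la$ to its preimage $0$ --- produce a preperiodic ray landing at $0$; the asymptotically constant imaginary part and the ``initial piece near $0$'' claims then follow from the normal form $g_{\underline{s}}(t)=t+2\pi i s_0+i\arg\la+o(1)$ and from continuity at the landing point. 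Two small points are worth tightening. First, your one-line justification that the cycle of $a$ is repelling (``no attracting or indifferent cycle in this case'') deserves a word more: for a Misiurewicz parameter the Julia set is all of $\bc$, which rules out attracting, parabolic and Siegel cycles, and a Cremer cycle is incompatible with a finite postsingular set; alternatively, the landing theorem of \cite{SZ1} covers parabolic points as well, so only the Cremer case needs excluding. Second, the strips you use to index inverse branches should be (and implicitly are) the \emph{static} partition, not the partition by components of $E_{\la}^{-1}(\gamma)$ that the paper introduces afterwards --- the latter would be circular since it presupposes $\gamma$. With those caveats your argument is a faithful reconstruction of the proof the paper delegates to \cite{SZ1}.
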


This result was  proven in \cite{B} for a certain subset of Misiurewicz parameters. Also,  in both papers \cite{B,SZ1} (and references therein) the existence of  the  curve $\gamma$ connecting the singular value to infinity is proven for many other parameter values, and, for those parameters, for a large subset of points in the Julia set (see Proposition \ref{prop:z-accessible} below). 

\begin{figure}[ht]
	\centering
	\includegraphics[width=150pt]{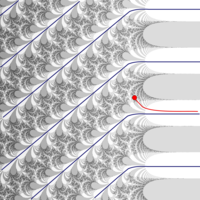}
	\setlength{\unitlength}{150pt}
	\put(-0.38,0.54){\small $z=0$}
	\put(-0.15,0,48){\small $\gamma$}
	\put(-0.95,0,07){\footnotesize $E_{\la}^{-1}\left(\gamma\right)$}
	\put(-0.95,0,34){\footnotesize $E_{\la}^{-1}\left(\gamma\right)$}
	\put(-0.95,0,61){\footnotesize $E_{\la}^{-1}\left(\gamma\right)$}
	\put(-0.95,0,85){\footnotesize $E_{\la}^{-1}\left(\gamma\right)$}
	\caption{\small This is the dynamical plane for $\la=2\pi i$, the {\it easiest} Misiurewicz point: $0  \mapsto 2\pi i \mapsto 2\pi i$.  Different graduations from white to black means different speeds in escaping to infinity. We (over)draw the curve $\gamma$ and some of the components of $E_{\la}^{-1}\left(\gamma\right)$.}
\end{figure}

We use the hair $\gamma$ connecting $0$ with $+\infty$ to divide the plane into $2\pi i$ periodic fundamental domains where we will define an appropriate symbolic dynamics. Since $0$ is an omitted value (it has no finite preimages and its only {\it moral} preimage is $-\infty$) the set $E_{\lambda}^{-1}\left(\gamma\right)$ is the union of infinitely many disjoint curves that extend to infinity in both directions. From Proposition \ref{prop:0-accessible} we know that each component of $E_{\lambda}^{-1}\left(\gamma\right)$ extends to $+\infty$ with an asymptotically  constant imaginary part (in fact each component of $E_{\lambda}^{-1}\left(\gamma\right)$ is asymptotic to a straight line Im$(z)=2\pi k$ for some $k\in \bz$), while, as we will see, it extends to $-\infty$ with certain bounded slope.  Finally the set $\bc \setminus E_{\lambda}^{-1}\left(\gamma\right)$ consists of infinitely many open strips labeled by $R_{k}, \ k\in\bz$, so that $R_{k}$ contains $2\pi i k$, and  $E_{\lambda}: R_{k} \mapsto \bc \setminus \gamma$ is a conformal isomorphism for all $k$.

From this division of the dynamical plane it is natural to define the set all infinite sequences $\Sigma = \{r=r_0r_1\ldots \},\ r_i\in \bz$.  For all  $z\in\bc$ whose forward orbit does not meet the boundaries of the $R_{k}$'s, we denote by $r(z)=r_{0}r_{1}r_{2}\ldots , \ r_{j}\in\bz$ its {\it itinerary (with respect to this partition)}  so that $r_{j}=n$ if and only if $E_{\lambda}^j(z)\in R_{n}$. Consequently, all points in $\bc$ have a well defined itinerary unless the point is eventually  mapped to the boundary of some $R_{k}$. In this case, however,  the point belongs to the escaping set and its dynamics is well understood. In particular the itinerary of $0$, and all points in $\gamma$, is always well defined and strictly preperiodic (remember that from Proposition \ref{prop:0-accessible} we know that $\gamma$ is a preperiodic dynamic ray landing at $0$). 

We say that $r$ is {\it exponentially bounded} if there exists $\hat{x}>0$ such that $2\pi |r_{j}|<E_{|\lambda|}^j\left(\hat{x}\right)$ for all $j\geq 0$.  For instance all periodic or preperiodic sequences are exponentially bounded. It is well--known that only exponentially bounded  sequences in $\Sigma$ correspond to realizable itineraries with respect to the {\it static partition} (i.e., the straight lines corresponding to the  preimages of the negative real part); see \cite{B,SZ,V}.

As we have mentioned, far to the right, the static partition of the plane and the one given by the curves $E_{\lambda}^{-1}\left(\gamma\right)$ asymptotically differs from a constant, so points whose orbit tends to infinity must have exponentially bounded itineraries. In contrast, for an arbitrary accessible singular value, we have no a priori control from the construction on the way the curves $E_{\lambda}^{-1}\left(\gamma\right)$ extend to $-\infty$ and so it is not immediate that {\it all} points in $\bc$ have exponentially bounded itineraries. In could be the case that the slope of the curves $E_{\lambda}^{-1}\left(\gamma\right)$ grows more than exponentially (increasing or decreasing) when approaching $-\infty$. However, if $\lambda$ is preperiodic, this approach to $-\infty$ has a bounded slope and this guarantees that all points in $\bc$ have an exponentially bounded itinerary.  

The next lemma collects the key aspects of the above discussion.

\begin{Lemma} \label{lemma:exp_bounded}
Let $\lambda\in\bc$ be a Misiurewicz parameter.
\begin{enumerate}
\item[(a)] If  $z\in\bc$ has itinerary $r(z)$, then $r(z)$ is exponentially bounded. 
\item[(b)] If $z\in\bc$ is a periodic (respectively preperiodic) point and  $w\in \bc \setminus \mi\left(E_{\la}\right)$ has the same periodic (respectively preperiodic) itinerary as $z$, then $w=z$.
\end{enumerate}
 \end{Lemma}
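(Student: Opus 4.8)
The plan is to prove the two parts by different mechanisms: part (a) by a one--dimensional recursion on real parts fed into the geometry of the partition, and part (b) by a contraction argument coming from the expanding structure of $E_\la$ at a Misiurewicz parameter. For (a), I would write $a_j=\mathrm{Re}\left(E_\la^j(z)\right)$. Since $\left|E_\la^{j+1}(z)\right|=|\la|\,e^{a_j}$ and $\mathrm{Re}(\zeta)\le|\zeta|$, one has $a_{j+1}\le |\la|\,e^{a_j}=E_{|\la|}(a_j)$; as $E_{|\la|}$ is increasing on $\br$, induction gives $a_j\le E_{|\la|}^{\,j}(\hat x)$ for every $\hat x\ge \mathrm{Re}(z)$. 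This is the engine producing the exponential bound and uses nothing about the partition. To turn it into a bound on $|r_j|$, recall that $E_\la^j(z)\in R_{r_j}$ and that $R_{r_j}$ contains $2\pi i r_j$: to the right the boundary curves $E_\la^{-1}(\gamma)$ are asymptotically horizontal, while to the left they leave with bounded slope $s$, so membership in $R_{r_j}$ forces $2\pi|r_j|\le \left|\mathrm{Im}\,E_\la^j(z)\right|+s\left|\mathrm{Re}\,E_\la^j(z)\right|+C\le (1+s)\left|E_\la^j(z)\right|+C=(1+s)|\la|\,e^{a_{j-1}}+C$ for $j\ge 1$ (the case $j=0$ being handled directly). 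Combining with $a_{j-1}\le E_{|\la|}^{\,j-1}(\hat x)$ yields $2\pi|r_j|\le (1+s)\,E_{|\la|}^{\,j}(\hat x)+C$, and the multiplicative factor $1+s$ and additive constant $C$ are absorbed by replacing $\hat x$ with a slightly larger value, using that the iterates of $E_{|\la|}$ grow super--exponentially; this gives $2\pi|r_j|<E_{|\la|}^{\,j}(\hat x)$ and proves (a).

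The main obstacle in (a) is justifying the bounded slope $s$, which is precisely where preperiodicity is used. The slope of a component of $E_\la^{-1}(\gamma)$ as it tends to $-\infty$ equals the spiralling rate $d(\arg\zeta)/d(\log|\zeta|)$ of $\gamma$ as $\zeta\to 0$, because $E_\la^{-1}$ sends the piece of $\gamma$ near the omitted value $0$ to the piece of its preimage near $-\infty$. Since $\gamma$ lands at $0$, whose forward orbit is finite and terminates on a repelling cycle, the landing geometry of $\gamma$ near that cycle is governed by the linearisation $u\mapsto\rho u$ and therefore spirals at the finite rate $\arg\rho/\log|\rho|$; pulling this control back along the finite preperiod and through $E_\la$ keeps the slope bounded. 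Making this ``bounded spiralling'' precise is the delicate point.

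For (b), the key structural fact is that the postsingular set $P=\{0,\la,E_\la(\la),\dots\}$ is finite (with $|P|\ge 2$, since $\la\ne 0$), so $\bc\setminus P$ is hyperbolic and $E_\la\colon \bc\setminus E_\la^{-1}(P)\to\bc\setminus P$ is a covering away from the omitted value $0\in P$. Consequently each inverse branch $L_k$ maps $\bc\setminus P$ into the proper subset $\bc\setminus E_\la^{-1}(P)$ and, by the Schwarz--Pick lemma, contracts the hyperbolic metric of $\bc\setminus P$. I would first treat the periodic case: if $z$ is periodic of period $n$ and $w\in\bc\setminus\mi(E_\la)$ has the same period--$n$ itinerary, set $L=L_{r_0}\circ\cdots\circ L_{r_{n-1}}$, so that $z=L(z)$ while $w=L^{\,k}\!\left(E_\la^{kn}(w)\right)$ for all $k$. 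Because $w$ is non--escaping and its itinerary is periodic (hence uses finitely many strips), its orbit stays in a hyperbolically bounded region on which $L$ contracts uniformly; thus $L^{\,k}(\cdot)$ converges to the unique fixed point $z$ of $L$, forcing $w=z$. The preperiodic case reduces to this: if $z$ has preperiod $m$ then $E_\la^m(z)$ is periodic and $E_\la^m(w)$ is non--escaping with the same periodic itinerary, so $E_\la^m(w)=E_\la^m(z)$, and applying the single--valued branches $L_{r_0},\dots,L_{r_{m-1}}$ determined by the common itinerary gives $w=z$.

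The delicate point in (b) is that the hyperbolic contraction of $\bc\setminus P$ degenerates as one approaches $P$, so the real work is to check that the orbit of the non--escaping point $w$ stays a definite hyperbolic distance from $P$, making the contraction factor uniformly bounded below $1$. This is the subhyperbolic expansion estimate for Misiurewicz parameters, and it is the step I expect to be the main obstacle for part (b).
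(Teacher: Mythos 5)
Part (a) of your proposal is essentially the paper's own argument. The engine is the same in both: the elementary recursion giving $|E_{\la}^j(z)|\le E_{|\la|}^j(\hat x)$, combined with the fact that the boundary curves $E_{\la}^{-1}(\gamma)$ are asymptotically horizontal to the right and leave to $-\infty$ with a bounded slope, the latter being extracted exactly as you describe: $\gamma$ lands at $0$, its forward image $\gamma_m$ lands on the repelling postsingular cycle, the linearizing coordinate there controls the spiralling rate, and this control is pulled back through the finite preperiod and through one application of $\log$. The paper phrases the conclusion as a contradiction with a putative non--exponentially--bounded sequence rather than as your direct absorption of the constants $(1+s)$ and $C$, but the content is identical, so (a) is fine.

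For part (b) you take a genuinely different route --- Schwarz--Pick contraction of inverse branches on $\bc\setminus P$, $P$ the finite postsingular set --- which is in fact the approach of Lemma 3.2 of the Rottenfusser--R\"uckert--Rempe--Schleicher paper that the author cites as an alternative proof of (b); the paper instead uniformizes truncated strips $\mathcal T_k=\{z\in R_k: m\le \mathrm{Re}(z)\le M\}$ by Riemann maps and shows the induced self--maps of the disc are eventually strict contractions. However, your version has a genuine gap at precisely the step you flag: you justify that the orbit of $w$ stays in a hyperbolically bounded region ``because $w$ is non--escaping and its itinerary is periodic (hence uses finitely many strips).'' That inference fails as stated: each strip $R_k$ is unbounded in both real directions and opens up with nonzero slope to the left, so confinement to finitely many strips bounds neither the real nor the imaginary part of the orbit, and uniform contraction is exactly what degenerates if the orbit wanders off. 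The paper closes this with two separate arguments that your proposal is missing: to the left, any point with $\mathrm{Re}<\mu$ has an itinerary agreeing with the \emph{strictly preperiodic} itinerary of $0$ for arbitrarily many symbols, which is incompatible with a fixed periodic itinerary of period $n$ (this is where Proposition \ref{prop:0-accessible}, i.e.\ that an initial piece of $\gamma$ sits in a small neighborhood of $0$, is used to get a single good component of $\mathcal T^{m,M}$); to the right, $M$ is chosen so that the right--hand edge of $\mathcal T^{m,M}$ maps to the right of itself, so an orbit ever crossing it escapes, contradicting $w\notin\mi(E_{\la})$. A second, smaller wrinkle in your route: the periodic point $z$ (or the orbit of $w$) may itself lie on the postsingular cycle, i.e.\ inside $P$, where the hyperbolic metric of $\bc\setminus P$ is undefined; the paper's strip--uniformization sidesteps this because its Poincar\'e metric lives on $\mathcal T_{r_0}$, not on $\bc\setminus P$. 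With the left/right trapping argument supplied (and the $z\in P$ case handled, e.g.\ by an orbifold metric or by deleting only a backward--invariant subset of $P$), your approach goes through.
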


\begin{proof}
First we prove (a). Fix $\la\in \bc$ Misiurewicz and let $0,z_0,\ldots z_{m-1},\overline{z_m,\ldots z_{m+p}}$, $m\geq 1,\ p\geq 0$, be the preperiodic orbit of $0$, where $\overline{z_m,\ldots z_{m+p}}$ means the string $z_m,\ldots z_{m+p}$ repeated infinitely many times. Let  $\mu=\left(E_{\lambda}^{p+1}\right)^{\prime}\left(z_m\right)$ and  take the linearizing coordinates around the point $z_m$ that conformally conjugates, via a conformal map $\Phi$, $E_{\lambda}^{p+1}$ to the linear map $z\to \mu z$, in a small neighborhood of $z_m$. Since $\gamma$ lands at $0$, its corresponding image by $E_\la^{m}$, $\gamma_m$, must land at $z_m$. Using the (local) linearizing coordinates $\gamma_m$ spirals around $z_m$ satisfying $\mu\Phi\left(\gamma_m\right)= \Phi\left(E_\la^{p+1}\left(\gamma_m\right)\right)$, and so it spirals with bounded speed governed by $\mu$. Consequently $\gamma$  (a suitable preimage of $\gamma_m$ landing at $0$) spirals around $0$ at  an exponentially bounded speed (in a sufficiently small neighborhood of $0$). Assuming it spiral clockwise (if it spirals counterclockwise the arguments are similar), the curves  $E_{\lambda}^{-1}\left(\gamma\right)$ tend to $-\infty$ with asymptotic exponentially  bounded negative slope, i.e. there exist constants $c>0$, $d>1$ and $m<0$ such that for all $z\in R_k, \ k\in\bz$, with Re$(z)<m$ it holds that 
\begin{equation}\label{eq:exp_bounded}
c\ {\rm Re}(z)-d+2k\pi \leq {\rm Im}(z) \leq  c\ {\rm Re}(z)+d+2k\pi.
\end{equation}
If $r=r_0r_1 \ldots r_n \ldots$ was not exponentially bounded, for any $x>0$ there would be  a (sub)sequence  of positive symbols $r_{j_k}\to\infty$  such that $2\pi r_{j_k} /E_{|\la|}^{r_{j_k}}(x)> M_{j_k}$ with $M_{j_k}\to \infty$ as $j_k\to\infty$. Let $z=x+iy, \ x>0$. It is easy to see that $|{\rm Im} \left(E_{\la}^{j}(z)\right)|\leq |E_{\la}^{j}(z)| \leq E_{|\la|}^j(x)$ for all $j>0$. On the other hand, for  $j_k$ large enough, if $w\in R_{r_{j_k}}$ with $|{\rm Re}(w)|\leq E_{|\la|}^{j_k}(x)$ we have from (\ref{eq:exp_bounded}) that 
$$
{\rm Im}(w) \geq c \ {\rm Re}(w)-d+2\pi r_{j_k} \geq -c \ E_{|\la|}^{j_k}(x) - d +M_{j_k}E_{|\la|}^{j_k}(x) > E_{|\la|}^{j_k}(x).
$$
Thus there are no points following a non exponentially bounded sequence (neither escaping infinity nor with oscillating orbit).  
    
Now we will prove (b). Because the branch of the inverse map taking values in $R_k$, i.e.  $E_{\lambda,k}^{-1}: R_{k} \mapsto \bc \setminus \gamma$, is a conformal isomorphism for all $k$, it is enough to show the statement when 
$z$ is a periodic point. Let $r:=r(z)=\overline{r_0\ldots r_{n-1}}$  be the (periodic) itinerary of $z$, and let $w$ be a non escaping point with the same itinerary, i.e. $r(z)=r(w)$.

Let 
$$
 \mathcal T_k^{m,M} =\{ z\in R_k \ | \ m\leq {\rm Re}(z) \leq M \} \quad {\rm and} \quad \mathcal T^{m,M}:=\bigcup_{k\in\{r_0,\ldots r_{n-1}\}} \mathcal T_k^{m,M}$$

We claim that there exists  $m <0$ small enough and $M>1$ large enough such that  all points of the orbit of $z$ and $w$ must lie in $\mathcal T^{m,M}$.  This claim follows since $z$ and $w$ are non--escaping points with periodic itinerary of period $n$, while the itinerary of all points to the left of (a small enough)  $m<0$ must coincide with the (strictly preperiodic) itinerary  of $0$ for an arbitrary number of symbols and so it cannot be periodic of period $n$.  
{\it A priori} $\mathcal T^{m,M}$ could have more than one connected component (since arbitrarily  far to the left the components of $E_\la^{-1}\left(\gamma\right)$ could have folds). However,  since an initial portion of $\gamma$ belongs to an arbitrarily small neighborhood of $0$ (see Proposition \ref{prop:0-accessible}), we may choose $m<0$ such that  only one of the connected components of $\mathcal T^{m,M}$  extends to Re$(z)=M$, and such that there are no points belonging to any other connected component of $\mathcal T^{m,M}$ that have a periodic itinerary of period $n$. We again denote by $\mathcal T^{m,M}$ the connected component with this property. 
Finally we choose, if necessary, a larger $M>0$ so that the right hand side boundary of $\mathcal T^{m,M}$ maps to the right of itself. To simplify notation we use $ \mathcal T_k:= \mathcal T_k^{m,M}$ and $\mathcal T := \mathcal T^{m,M}$. Finally we denote by $E_{\la,\mathcal T_k}^{-1}$ the brach of the inverse of $E_\la$ taking values in $\mathcal T_k$.

For all $k\in\{r_0,\ldots r_{n-1}\}$, let $\varphi_k:\mathbb D \to \mathcal T_k$ be the Riemann mapping, where $\mathbb D$ is the open unit disc. Consider the sequence of maps
$$
\Phi_{\ell} := \varphi_{r_0}^{-1} \circ \left( E_{\la,\mathcal T_{r_0}}^{-1}\circ \ldots \circ E_{\la,\mathcal T_{r_{n-1}}}^{-1}  \right)^{\ell} \circ \varphi_{r_0}, \ \ell >0,
$$ 
where $\left( E_{\la,\mathcal T_{r_0}}^{-1}\circ \ldots \circ E_{\la,\mathcal T_{r_{n-1}}}^{-1}  \right)^{\ell}$ denotes the composition of the maps repeated $\ell$ times. Clearly, $\Phi_{\ell},\ \ell>0$, are well--defined  maps from the unit disc to itself. Notice that $z$ (and $w$) must belong to $T_{r_0}$ and have itinerary $r$. Moreover, as $\ell$ tends to infinity, points in the image of $\Phi_\ell \left(\mathbb D\right)$ must follow the periodic sequence $r$ for an arbitrarily large number of symbols. 

We claim that, for sufficiently large $\ell$, the maps $\Phi_\ell$'s are strict contractions in the Poincar\'e metric of $\mathbb D$, and so $w=z$. 
The claim follows from two considerations. On the one hand, the right--hand--side boundary of each of the $\mathcal T_k$ maps outside $\mathcal T$. On the other hand, points in a preassigned (small) neighborhood  of  the upper, lower and left--hand--side boundaries of each one of the $\mathcal T_k$'s must agree  with the  strictly preperiodic sequence of $0$ for an arbitrarily large number of symbols while, for $\ell$ sufficiently large, the points in the image of $\Phi_\ell$ agree with (the periodic sequence) $r$ for an arbitrarily large number of symbols. 
\end{proof}

Statement (b) in the previous lemma also follows from Lemma 3.2 in 
\cite{R3S}, and from Proposition 4.4 in \cite{SZ1} when $w$ is assumed to also be periodic. 

We finish this section by stating (only for Misiurewicz parameters) the following fact, which is proven in \cite{SZ} (precisely, Corollary 6.9), concerning the structure of the escaping set. 

\begin{Proposition}\label{prop:z-accessible}
Let $\lambda\in\bc$ be a Misiurewicz parameter for $E_{\lambda}$. If $z \in  \mi\left(E_{\lambda}\right)$ then $z$ is accessible (i.e. it must belong to a certain hair or it is the endpoint of a hair).
\end{Proposition}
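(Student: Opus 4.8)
The plan is to organize $\mi(E_\la)$ by symbolic dynamics and escape rate, showing that the escaping points sharing a fixed itinerary form exactly one hair together with its endpoint. First I would note that if $z\in\mi(E_\la)$ then ${\rm Re}\left(E_\la^n(z)\right)\to\infty$, so after finitely many iterates the orbit remains in the region where the partition $\{R_k\}_{k\in\bz}$ is unambiguous and never meets the curves $E_\la^{-1}\left(\gamma\right)$; hence $z$ has a well-defined itinerary $r(z)=r_0r_1r_2\ldots$, which by Lemma \ref{lemma:exp_bounded}(a) is exponentially bounded. The only points left out are those eventually mapped onto some boundary curve, but these lie on $\gamma$ or on one of its preimages $E_\la^{-k}\left(\gamma\right)$, each of which is itself a hair landing at a preimage of $0$ (Proposition \ref{prop:0-accessible}); for such points the conclusion is immediate, so I may assume $r(z)$ is defined.

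Next I would invoke the tail-and-pullback construction recalled before Proposition \ref{prop:0-accessible}. For the exponentially bounded address $r(z)$ there is a tail, a curve lying far to the right in the dynamical plane that realizes $r(z)$; applying the inverse branches $E_{\la,k}^{-1}:\bc\setminus\gamma\to R_k$ in the order prescribed by $r(z)$ produces a nested sequence of preimages converging to a hair $h=h_{r(z)}\subset\mi(E_\la)$. This hair is a curve parametrized by a potential $t\in(t_{r(z)},\infty)$ measuring the escape rate, all of whose points carry the itinerary $r(z)$, and which lands at a unique endpoint as $t$ decreases to $t_{r(z)}$. Exponential boundedness is used precisely here: it guarantees both that the tail exists and that the successive preimages remain in a region where the inverse branches are uniform contractions, so the pullback converges to a genuine curve.

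The heart of the matter is to show that $z$ either lies on $h$ or equals its endpoint; equivalently, that $h$ exhausts all escaping points with itinerary $r(z)$. I would attach to $z$ a potential $t(z)$ extracted from the growth rate of ${\rm Re}\left(E_\la^n(z)\right)$, and prove that for each admissible value of the potential there is at most one escaping point with address $r(z)$ realizing it. This uniqueness is the same contraction phenomenon used in Lemma \ref{lemma:exp_bounded}(b): two escaping points with identical itineraries are pulled back through the very same sequence of inverse branches, and since each $E_\la: R_k\to\bc\setminus\gamma$ is a conformal isomorphism onto a proper subdomain, its inverse strictly contracts the relevant hyperbolic metric, so the hyperbolic distance between the two backward orbits cannot remain bounded unless the two points coincide. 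Consequently $z$ is the unique escaping point of its address and potential, which is by construction a point of $h$ (for $t(z)>t_{r(z)}$) or its endpoint (for $t(z)=t_{r(z)}$).

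The step I expect to be the main obstacle is this exhaustion argument: constructing $h$ as an honest curve with a well-defined endpoint and proving that it captures \emph{every} escaping point of its address. The delicate points are the convergence of the infinite pullback, which rests on the exponential bound of Lemma \ref{lemma:exp_bounded}(a) to keep the preimages inside a contracting region, and the separation of the fast-escaping points, forming the interior of the hair, from the possibly slow-escaping endpoint. This is exactly the classification carried out in \cite{SZ}, on which the cited Corollary 6.9 rests.
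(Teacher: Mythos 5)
The paper does not actually prove this proposition: it is quoted as Corollary 6.9 of \cite{SZ}, and that citation is the entirety of the paper's justification. So the relevant question is whether your sketch would stand as an independent proof, and it would not.

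The step you yourself single out as the main obstacle --- that the hair $h_{r(z)}$ exhausts \emph{all} escaping points with address $r(z)$ --- is the entire content of the result, and your proposal does not supply it. Worse, the uniqueness argument you do offer is wrong as stated. You claim that two escaping points with identical itineraries must coincide because the inverse branches $E_{\la,k}^{-1}:\bc\setminus\gamma\to R_k$ strictly contract the hyperbolic metric, so that ``the hyperbolic distance between the two backward orbits cannot remain bounded unless the two points coincide.'' If that worked it would prove far too much: every point of the hair $h_{r(z)}$ shares the itinerary $r(z)$, so each hair would collapse to a single point. The contraction is strict but not uniform, and it degenerates exactly where escaping orbits live, namely as ${\rm Re}(z)\to+\infty$ inside the strips. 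This is precisely why Lemma \ref{lemma:exp_bounded}(b) in the paper is stated only for a \emph{non-escaping} point $w$: its proof first traps both orbits in the compact region $\mathcal T^{m,M}$, where the maps $\Phi_\ell$ are uniform contractions, and such confinement is impossible for escaping points. Your attempted repair --- uniqueness per pair (address, potential) rather than per address --- is the correct statement, but it does not follow from hyperbolic contraction; in \cite{SZ} it rests on explicit growth estimates along the pullback, plus a separate and delicate treatment of slowly escaping points as endpoints. None of that appears in your proposal, so what you have is an accurate roadmap of the cited reference rather than a proof of it.
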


\section{Proof of Theorem A}

Let $\lambda$ be a Misiurewicz parameter and  $\gamma$ the curve that makes $0$ accessible (in fact from Proposition \ref{prop:0-accessible} we know that $\gamma$ is a preperiodic hair). In what follows when considering boundaries and closures they will be relative to the complex plane (rather than the extended complex plane).

The proof of the main result will be done by contradiction. If  $\mi(E_\la)$ was not a connected subset of the plane, there would exist an open connected set $U\subset \bc$ such that the following three conditions would be satisfied 
\begin{equation}\label{eq:definition-U}
\begin{split}
& {\rm (a)} \ \ \mi(E_\la) \cap U  \ne \emptyset \\
& {\rm (b)} \ \ \mi(E_\la) \cap \partial U  = \emptyset \\
& {\rm (c)} \ \  \mi(E_\la) \not\subset  U 
\end{split}
\end{equation}

\begin{Lemma} \label{lemma:unbounded}
If a connected set $U$ satisfies (\ref{eq:definition-U}), then $U$ and $\bc\setminus U$ must be unbounded sets.
\end{Lemma}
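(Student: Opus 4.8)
The plan is to prove the two assertions separately but by the same mechanism, using Proposition \ref{prop:z-accessible}, which guarantees that \emph{every} point of $\mi(E_\la)$ lies on (or is the endpoint of) a hair, i.e.\ admits a curve running to $+\infty$ along which the real part tends to infinity. The decisive observation is that such a curve, once it is anchored at an escaping point, lies entirely in $\mi(E_\la)$ and therefore, by hypothesis (b) in \eqref{eq:definition-U}, never meets $\partial U$. A connected set that avoids $\partial U$ must stay on a single side of it, because $\bc\setminus\partial U = U \sqcup (\bc\setminus\overline U)$ is a disjoint union of two open sets, and a connected subset of such a union lies in one of the pieces. Combining these facts turns each of conditions (a) and (c) into an unbounded curve on the appropriate side.

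First I would show that $U$ is unbounded. By condition (a), choose $z_0 \in \mi(E_\la)\cap U$. By Proposition \ref{prop:z-accessible} the point $z_0$ is accessible, so there is a curve $\gamma_{z_0}:[0,\infty)\to\mj(E_\la)$ with $\gamma_{z_0}(0)=z_0$, with $\gamma_{z_0}(t)\in\mi(E_\la)$ for $t>0$, and with $\mathrm{Re}(\gamma_{z_0}(t))\to\infty$; since $z_0\in\mi(E_\la)$ as well, the whole image of $\gamma_{z_0}$ lies in $\mi(E_\la)$. By (b) this image is disjoint from $\partial U$, hence contained in $\bc\setminus\partial U = U\sqcup(\bc\setminus\overline U)$. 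As the image is connected and contains the point $z_0\in U$, it lies entirely in $U$. Because $\mathrm{Re}(\gamma_{z_0}(t))\to\infty$, this curve is unbounded, and therefore so is $U$.

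The argument for $\bc\setminus U$ is symmetric. By condition (c) there is a point $w\in\mi(E_\la)$ with $w\notin U$; by (b) we have $w\notin\partial U$, so in fact $w\in\bc\setminus\overline U$. Applying Proposition \ref{prop:z-accessible} to $w$ produces a curve $\gamma_w$ from $w$ to $+\infty$ lying in $\mi(E_\la)$, hence disjoint from $\partial U$. Exactly as before, this connected curve starts in the open set $\bc\setminus\overline U$ and must remain in it; since $\bc\setminus\overline U\subset\bc\setminus U$ and the curve is unbounded, $\bc\setminus U$ is unbounded.

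The one point requiring care --- and the step I expect to be the main, if modest, obstacle --- is the verification that the accessibility curve attached to an escaping point lies \emph{entirely} in $\mi(E_\la)$, so that hypothesis (b) really forbids it from touching $\partial U$. This is where it matters that the chosen base points $z_0$ and $w$ are themselves escaping (not merely endpoints of hairs), and that the definition of accessibility places the tail $\gamma(t),\ t>0$, in $\mi(E_\la)$. Once this is secured, the topological confinement of a connected set to one component of $\bc\setminus\partial U$ is routine, and the whole weight of the lemma rests on the accessibility statement of Proposition \ref{prop:z-accessible}.
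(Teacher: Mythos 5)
Your argument is correct and is essentially the paper's own proof: both rest on Proposition \ref{prop:z-accessible} to attach to an escaping point an unbounded curve inside $\mi(E_\la)$, which by condition (b) cannot meet $\partial U$ and hence stays on one side of it. The paper phrases this as a contradiction (a bounded side would force the curve to cross $\partial U$) while you argue directly, but the content is identical.
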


\begin{proof}
Because $U$ satisfies (\ref{eq:definition-U}) there are points of the escaping set in $U$ and $\bc\setminus U$. From Proposition \ref{prop:z-accessible}  any point  in the escaping set is accessible.  So if $U$ or $\bc\setminus U$ were bounded,  the curve connecting any point of the escaping set with infinity would cross the boundary of $U$, a contradiction to condition 1(b). 
\end{proof}

\begin{Lemma}\label{lemma:new-V}
If there exists an (unbounded) open connected set $U$ satisfying (\ref{eq:definition-U}), then there exists an unbounded open connected set $V\subset \bc$ satisfying (\ref{eq:definition-U})  and having an unbounded connected boundary. Moreover $U_{1}=\bc \setminus \overline{V}$ is an unbounded open connected subset of the plane such that  $\mi(E_\la)\cap U_{1}\ne \emptyset$ and $\partial U_{1}=\partial V$.
\end{Lemma}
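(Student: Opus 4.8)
The plan is to fix two escaping points on opposite sides of $U$ and then replace $U$ by a topologically tame separating set obtained through a complementary-component (``filling'') construction on the Riemann sphere $\widehat{\bc}=\bc\cup\{\infty\}$. First I would choose witnesses: by (a) there is $p\in\mi(E_\la)\cap U$, and by (b)--(c) there is $q\in\mi(E_\la)$ with $q\notin\overline U$ (it cannot lie on $\partial U$ by (b)). By Lemma \ref{lemma:unbounded} both $U$ and $\bc\setminus U$ are unbounded, so $\infty\in\partial_{\widehat{\bc}}U$. The two soft inputs I will use repeatedly are: (i) \emph{boundaries stay inside $\partial U$}, so they avoid $\mi(E_\la)$ by (b); and (ii) \emph{accessibility}, since by Proposition \ref{prop:z-accessible} every escaping point is joined to $+\infty$ by a curve lying (except its endpoint) in $\mi(E_\la)$, so --- exactly as in the proof of Lemma \ref{lemma:unbounded} --- any open set whose boundary misses $\mi(E_\la)$ and which contains an escaping point must be unbounded.

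Second, the construction. Let $U_1^0$ be the connected component of $\bc\setminus\overline U$ containing $q$; it is an unbounded domain with $\partial U_1^0\subseteq\partial\overline U\subseteq\partial U$, hence $\partial U_1^0\cap\mi(E_\la)=\emptyset$. Viewing $U_1^0$ as a domain in $\widehat{\bc}$, let $C$ be the connected component of $\widehat{\bc}\setminus U_1^0$ containing $p$; since $\overline U\cup\{\infty\}$ is connected, disjoint from $U_1^0$ and unbounded, it is contained in $C$. I would then set $U_1:=\widehat{\bc}\setminus C$ and let $V$ be the connected component of $\mathrm{int}\,C$ containing $p$. The engine is the standard fact that \emph{the complement of a single complementary component of a domain in $\widehat{\bc}$ is a simply connected domain}, together with unicoherence of the sphere: applying $S^2=A\cup B$ with $A=\overline{U_1}$, $B=C$ (both closed and connected, $A\cup B=\widehat{\bc}$, $A\cap B=\partial C$) shows $\Gamma:=\partial C$ is connected. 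Thus $U_1$ is an open connected set containing $q$, and $\Gamma\subseteq\partial U_1^0\subseteq\partial U$ is connected and misses $\mi(E_\la)$. Since $\infty\in C$ one gets $U_1\subseteq\bc$, while the accessibility principle (ii) forces both $U_1\ni q$ and $V\ni p$ to be unbounded; as $\infty\in\Gamma$, the boundary $\Gamma$ is unbounded too. Granting the identification $V=\bc\setminus\overline{U_1}$ with $\partial V=\partial U_1=\Gamma$, conditions (a)--(c) for $V$ follow immediately: $p\in\mi(E_\la)\cap V$, $\Gamma\cap\mi(E_\la)=\emptyset$, and $q\in\mi(E_\la)\setminus V$.

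The hard part is precisely this last identification: that $\widehat{\bc}\setminus\Gamma$ consists of \emph{exactly} the two domains $V$ and $U_1$, equivalently that $\mathrm{int}\,C$ is connected and $C=\overline{\mathrm{int}\,C}$, so that $\bc\setminus\overline V=U_1$ and $\partial V=\Gamma$ rather than merely $\partial V\subseteq\Gamma$. For a general continuum $C$ this can fail (think of two disks joined by an arc), so pure point-set topology is not enough; here I expect to exploit the dynamical input that $\Gamma\subseteq\partial U$ avoids $\mi(E_\la)$, combined with accessibility, to rule out any spurious component of $\mathrm{int}\,C$: such a component carrying an escaping point would again have to be unbounded and to abut the entire connected set $\Gamma$, contradicting the separation hypothesis (b). Assembling these facts --- and carefully tracking the point at infinity throughout, since it lies on $\Gamma$ and must be discarded when passing back from $\widehat{\bc}$ to $\bc$ --- yields the domain $V$ and its companion $U_1=\bc\setminus\overline V$ with all the asserted properties.
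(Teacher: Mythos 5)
Your overall strategy is the same circle of ideas as the paper's: pass to a connected component of $\bc\setminus\overline U$, whose boundary automatically lies in $\partial U$ and hence misses $\mi(E_\la)$; invoke unicoherence of the sphere (together with the ``filling'' of complementary components) to get a connected, unbounded boundary; and take the companion domain to be the complement of a closure. The paper does this more directly --- it lets $V$ be an unbounded component of $\bc\setminus\overline U$ meeting the escaping set and sets $U_{1}:=\bc\setminus\overline V$ --- whereas you insert an extra filling step on $\widehat{\bc}$ (the component $C$ of $\widehat{\bc}\setminus U_1^0$ containing $p$) and swap the roles of $V$ and $U_1$. Up to the point where you establish that $\Gamma=\partial C$ is connected, unbounded and disjoint from $\mi(E_\la)$, your argument is sound and in places more carefully justified than the paper's.

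The genuine gap is the step you yourself flag as ``the hard part'': that $\mathrm{int}\,C$ is connected and $C=\overline{\mathrm{int}\,C}$, hence $\bc\setminus\overline V=U_1$ and $\partial V=\Gamma$. You do not prove it, and the sketch you offer does not close it. A ``spurious'' component $W$ of $\mathrm{int}\,C$ containing an escaping point would be an unbounded domain with $\partial W\subseteq\Gamma$ disjoint from $\mi(E_\la)$ --- but that is not a contradiction with (b): it is exactly the configuration that $V$ and $U_1$ themselves realize, so nothing is ``contradicted''. Moreover, components of $\mathrm{int}\,C$ containing no escaping points, and one-dimensional pieces of $C$ where $C\neq\overline{\mathrm{int}\,C}$, are not touched by your accessibility argument at all, and either phenomenon destroys the conclusions that $\bc\setminus\overline V$ equals $U_1$, is connected, and has boundary equal to $\partial V$. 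So the ``Moreover'' clause remains unproved in your write-up. (To be fair, the paper is also essentially silent at this point --- it defines $U_{1}:=\bc\setminus\overline V$ and asserts its properties without argument --- but its choice of $V$ as a full component of $\bc\setminus\overline U$ makes the properties of $V$ itself immediate and avoids introducing the set $\mathrm{int}\,C$ whose pathologies you must then exclude.) Note finally that conditions (a)--(c) for your $V$, and the connectedness and unboundedness of $\partial V$, do not actually need the identification: since $\widehat{\bc}\setminus\mathrm{int}\,C=\overline{U_1}$ is a continuum, $V$ is a component of the complement of a continuum, so $\widehat{\bc}\setminus V$ is connected and unicoherence applies directly to the pair $\overline V$, $\widehat{\bc}\setminus V$; only the ``Moreover'' statement is at stake, and that is the part your proposal leaves open.
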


\begin{proof}
Start with any set $U$ satisfying (\ref{eq:definition-U}). From Lemma \ref{lemma:unbounded} we know it is unbounded. Let us consider $\bc \setminus \overline{U}$. This is a union of open connected sets, denoted by $V_{\omega},\ \omega\in\Omega$, each of which has a connected boundary (otherwise $U$ would be disconnected). From Lemma \ref{lemma:unbounded}  we conclude that at least one of them, $V$, must be unbounded and such that $V \cap \mi(E_\la) \ne \emptyset$, since otherwise conditions (\ref{eq:definition-U}b) and (\ref{eq:definition-U}c) would not be satisfied simultaneously for $U$. Since $\bc$ is unicoherent we have that $\partial V$ is an unbounded connected set (see \cite{W}, Chapter 4). Finally we define $U_{1}:=\bc \setminus \overline{V}$.  
\end{proof}

From the above we may assume that $V$ is  an unbounded, open set satisfying (\ref{eq:definition-U}), $U_{1}=\bc \setminus \overline{V}$ is an  unbounded, open connected subset of the plane such that  $\mi(E_\la)\cap U_{1}\ne \emptyset$, and its common boundary $\mc:=\partial U_{1}=\partial V$ is an unbounded, closed, connected set.

\begin{Lemma}\label{lemma:no-cuts}
For each $k\geq 0$,  $E_\la^k(\mc)\cap \partial R_{j} = \emptyset$ for all $j\in\bz$. In other words for each $k\geq 0$ we have that $E_\la^k(\mc) \subset R_{j}$ for some $j\in\bz$. 
\end{Lemma}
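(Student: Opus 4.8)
The plan is to exploit the fact that the entire grid of strip boundaries consists of escaping points, whereas $\mc$ is by construction disjoint from the escaping set, and then to transport this disjointness through all iterates by using the complete invariance of $\mi(E_\la)$. The statement will thus reduce to a clean incompatibility: a point whose image lands on a strip boundary would have to escape, but points of $\mc$ do not.

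First I would record the key structural observation that every component of $E_\la^{-1}(\gamma)$ --- and in particular every strip boundary $\partial R_j$ --- is contained in $\mi(E_\la)$. Indeed, by the definition of accessibility (Proposition \ref{prop:0-accessible}) the points $\gamma(t)$ with $t>0$ all escape, so $\gamma\setminus\{0\}\subset \mi(E_\la)$; and since $0$ is an omitted value it has no finite preimages, whence $E_\la^{-1}(\gamma)=E_\la^{-1}(\gamma\setminus\{0\})$. Therefore every $p\in E_\la^{-1}(\gamma)$ satisfies $E_\la(p)\in\gamma\setminus\{0\}\subset\mi(E_\la)$, and complete invariance of the escaping set forces $p\in\mi(E_\la)$. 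Since $\bigcup_{j}\partial R_j=E_\la^{-1}(\gamma)$, this gives $\partial R_j\subset\mi(E_\la)$ for every $j$. This is the one place where care is genuinely needed: one must notice that the (non-escaping) landing point $0\in\gamma$ pulls back to nothing, so that no non-escaping point can sit on a strip boundary.

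Next I would prove the first assertion, that $E_\la^k(\mc)\cap\partial R_j=\emptyset$ for every $k\geq 0$ and every $j\in\bz$, by contradiction together with complete invariance. Suppose some $q\in E_\la^k(\mc)\cap\partial R_j$. By the previous step $q\in\mi(E_\la)$. Writing $q=E_\la^k(p)$ with $p\in\mc$, complete invariance of $\mi(E_\la)$ gives $p\in\mi(E_\la)$. But $\mc=\partial V$ is disjoint from $\mi(E_\la)$ by condition (\ref{eq:definition-U}b) for $V$, a contradiction. (For $k=0$ this is simply $\mc\cap\partial R_j\subset\mc\cap\mi(E_\la)=\emptyset$.) Finally I would deduce the reformulation from connectivity: since $\mc$ is connected and $E_\la^k$ is continuous, $E_\la^k(\mc)$ is connected; as the strips $R_j$ are exactly the connected components of $\bc\setminus E_\la^{-1}(\gamma)=\bc\setminus\bigcup_j\partial R_j$, a connected set meeting none of the $\partial R_j$ must lie entirely inside a single $R_j$, so $E_\la^k(\mc)\subset R_j$ for some $j$.

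I do not expect a serious obstacle here: the whole statement rests on the interplay between ``strip boundaries are escaping'' and ``$\mc$ is non-escaping'', propagated by complete invariance, with the connectivity of $\mc$ supplying the final containment in one strip. The only genuinely delicate point is the one flagged above, namely that the landing point $0$ being an omitted value is precisely what guarantees that $E_\la^{-1}(\gamma)$ carries no non-escaping points.
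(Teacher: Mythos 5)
Your proof is correct and follows essentially the same route as the paper, which simply observes that all points of $\partial R_j$ escape while no point of $E_\la^k(\mc)$ does; your write-up just makes explicit the supporting details (that $0$ is omitted so $E_\la^{-1}(\gamma)\subset\mi(E_\la)$, complete invariance of $\mi(E_\la)$, and connectedness of $E_\la^k(\mc)$) that the paper leaves implicit.
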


\begin{proof}
The proof is straightforward since all points in $\partial R_{j},\ j \in \bz$, belong to the escaping set while all points in $E_\la^k(\mc), \ k\geq 0$ do not.
\end{proof}

\begin{Lemma}\label{negative-real-part}
Let  $r(0)=s_0s_1\ldots$  
be the preperiodic itinerary of $0$. Then, for all $n\in\bn$, there exists $\mu$ such that if Re$(z)<\mu$ then $r(z)=r_{0}r_{1}\ldots$ satisfies $r_{j}=s_{j-1}$ for all $j=1,\ldots n$.
\end{Lemma}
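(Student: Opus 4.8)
The plan is to exploit that $0$ is the omitted asymptotic value of $E_\la$: every point lying far to the left is mapped by one iterate into an arbitrarily small neighbourhood of $0$. Indeed $|E_\la(z)|=|\la|\,e^{{\rm Re}(z)}$, so $E_\la(z)\to 0$ as ${\rm Re}(z)\to -\infty$, with no constraint on ${\rm Im}(z)$. Once $E_\la(z)$ sits close to $0$, continuity of the next few iterates forces the orbit of $z$ to shadow the orbit of $0$ for as many steps as we wish; and since the orbit of $0$ stays in the \emph{interiors} of the strips $R_k$, the two itineraries must agree on the matching symbols. The asserted index shift $r_j(z)=s_{j-1}$ simply reflects that the shadowing starts one iterate late, at $w:=E_\la(z)\approx 0$ rather than at $0$ itself.

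To carry this out, first I would record that the orbit of $0$ never meets any boundary $\partial R_k$: this is exactly the assertion that $r(0)=s_0s_1\ldots$ is well defined, so $E_\la^k(0)$ lies in the \emph{open} strip $R_{s_k}$ for every $k\geq 0$. Fix $n\in\bn$. For each $k=0,1,\ldots,n-1$ the iterate $E_\la^k$ is continuous and $E_\la^k(0)\in R_{s_k}$ with $R_{s_k}$ open, so there is an open neighbourhood $B_k$ of $0$ with $E_\la^k(B_k)\subset R_{s_k}$. Setting $B:=\bigcap_{k=0}^{n-1}B_k$ gives an open neighbourhood of $0$ such that $w\in B$ implies $E_\la^k(w)\in R_{s_k}$ for all $k=0,\ldots,n-1$.

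Finally I would choose $\mu<0$ negative enough that ${\rm Re}(z)<\mu$ forces $E_\la(z)\in B$; this is possible by the first paragraph, since $B$ contains a disc about $0$ and $|E_\la(z)|$ is as small as we like once ${\rm Re}(z)$ is sufficiently negative. Writing $w=E_\la(z)$ and invoking the previous step, $E_\la^{j}(z)=E_\la^{j-1}(w)\in R_{s_{j-1}}$ for every $j=1,\ldots,n$, which is precisely $r_j(z)=s_{j-1}$ on that range. I do not expect a serious obstacle here: the one genuinely essential ingredient is that the orbit of $0$ avoids all strip boundaries (so that finitely many iterates can be shadowed within open sets), and the only bookkeeping subtlety is the single-step index shift described above. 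Since $n$ is fixed and finite, no uniformity in $n$ is needed and the argument closes at once.
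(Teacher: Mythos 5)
Your proof is correct and follows essentially the same route as the paper's: send the half-plane $\mathrm{Re}(z)<\mu$ into a small disc about $0$ and then use continuity of finitely many iterates to shadow the orbit of $0$ for $n$ steps, producing the one-step index shift $r_j=s_{j-1}$. Your write-up is in fact slightly more careful than the paper's, since you make explicit that the orbit of $0$ lies in the open strips $R_{s_k}$, which is what allows the neighbourhoods $B_k$ to exist.
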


\begin{proof}
Fix any $n\geq 1$. The exponential map $E_\la$ sends any vertical line Re$(z)=\rho$ to a circle of radius $|\la| {\rm exp}(\rho)$. So, choosing $\rho_{0}<<0$ sufficiently small each half plane Re$(z)<\rho_{0}$ is mapped into a small disk around $z=0$. Hence 
$r_{1}=s(0)=0$. So by continuity of  $E_\lambda$,  and choosing, if necessary, $\mu\leq \rho_{0}<0$,  we guarantee that for all  points in Re$(z)<\mu$ we have  $r_{j}=s_{j-1}$ for  $j=2,\ldots n$.  
\end{proof}

\begin{Lemma}\label{lemma:left}
Fix $k\geq 0$. Then, if $z\in E_\la^k(\mc)$ there exist $\rho=\rho(k)$ such that Re$(z)>\rho$. In other words $E_\la^k\left(\mc\right), \ k\geq 0$ cannot be unbounded to the left. 
\end{Lemma}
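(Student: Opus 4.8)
The plan is to argue by contradiction, converting the geometric hypothesis (unbounded to the left) into a rigid statement about itineraries. The key preliminary observation is that Lemma~\ref{lemma:no-cuts} is stronger than it first appears: since $E_\la^i(\mc)$ lies in a single strip $R_{j_i}$ for \emph{every} $i\geq 0$, each point $c\in\mc$ has $E_\la^i(c)\in R_{j_i}$, so every point of $\mc$ has one and the same (well-defined, since the orbit never meets any $\partial R_j$) itinerary $\tau=\tau_0\tau_1\tau_2\ldots$ with $\tau_i=j_i$; accordingly every point of $E_\la^k(\mc)$ shares the shifted itinerary $\tau_k\tau_{k+1}\ldots$. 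Moreover, because each $E_\la$ restricts to a conformal isomorphism on the relevant strip, the composition $E_\la^{k+1}$ is injective on $\mc$; as $\mc$ is unbounded it has at least two points, hence $E_\la^{k+1}(\mc)$ has at least two points. I would record these two facts first.

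Next I would feed left-unboundedness into Lemma~\ref{negative-real-part}, writing $r(0)=s_0s_1\ldots$ for the (strictly preperiodic) itinerary of $0$. Suppose $E_\la^k(\mc)$ were unbounded to the left. For each $n$ take the threshold $\mu=\mu_n$ of Lemma~\ref{negative-real-part}; by hypothesis there is $w\in E_\la^k(\mc)$ with ${\rm Re}(w)<\mu_n$, whose itinerary $r(w)=\tau_k\tau_{k+1}\ldots$ is then forced to agree with that of $0$, i.e. $\tau_{k+1}=s_0,\ \tau_{k+2}=s_1,\ \ldots,\ \tau_{k+n}=s_{n-1}$. Since the itinerary is the \emph{same} fixed sequence $\tau$ for all such $w$, and $n$ is arbitrary, I conclude $\tau_{k+1+i}=s_i$ for all $i\geq 0$; that is, $E_\la^{k+1}(\mc)$ carries exactly the itinerary $r(0)$.

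Finally I would invoke uniqueness. Every point of $E_\la^{k+1}(\mc)$ is non-escaping (it is the image of the non-escaping set $\mc$) and has the strictly preperiodic itinerary $r(0)$ of the point $0$. Lemma~\ref{lemma:exp_bounded}(b) then forces each such point to equal $0$, so $E_\la^{k+1}(\mc)=\{0\}$, contradicting that $E_\la^{k+1}(\mc)$ has at least two points. Hence no $E_\la^k(\mc)$ can be unbounded to the left, which is the assertion.

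I expect the decisive step — the one that makes or breaks the argument — to be the passage ``agrees with $r(0)$ on every finite prefix $\Rightarrow$ equals $r(0)$''. This works precisely because the common itinerary $\tau$ is a single fixed sequence, so that matching $0$'s itinerary on arbitrarily long prefixes pins down $\tau$ completely; without the constancy coming from Lemma~\ref{lemma:no-cuts} one could only extract arbitrarily long but point-dependent agreements, and Lemma~\ref{lemma:exp_bounded}(b) would not apply. The remaining ingredients (well-definedness of the itineraries along the orbit, guaranteed since $E_\la^i(\mc)$ avoids all $\partial R_j$, and injectivity of $E_\la^{k+1}$ on $\mc$) are routine.
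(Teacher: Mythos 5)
Your argument is correct and follows essentially the same route as the paper: both convert left-unboundedness, via Lemma~\ref{negative-real-part} and the single-strip property of Lemma~\ref{lemma:no-cuts}, into forced agreement of itineraries with the (shifted) itinerary of $0$, and both close the argument with Lemma~\ref{lemma:exp_bounded}(b). The only cosmetic difference is where the contradiction is clinched: the paper fixes one $w\in E_\la^k(\mc)$, uses Lemma~\ref{lemma:exp_bounded}(b) to locate a first index $n$ at which $r(w)$ must disagree with the shifted $r(0)$, and then exhibits a far-left point $z$ agreeing up to $n$, so that $E_\la^{n}(w)$ and $E_\la^{n}(z)$ lie in different strips, contradicting Lemma~\ref{lemma:no-cuts}; you instead push the agreement to all indices and conclude $E_\la^{k+1}(\mc)=\{0\}$, which is absurd (by your injectivity observation, or already because $0$ is an omitted value of $E_\la$).
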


\begin{proof}  Since $0$ is preperiodic we can write its preperiodic itinerary as  $r(0)=s_0\ldots s_{m-1}\overline{s_m\ldots s_{m+p}}$, with  $m\geq 1$, $p\geq 0$. Of course $s_0=0$.

Let $k\geq 0$. Let us fix $w\in E_\la^k\left(\mc\right)$ and consider its itinerary $r(w)=t_{0}t_{1}t_2\ldots$. Since $w$ is not an escaping point we have from  Lemma  \ref{lemma:exp_bounded}(b) that there exists $n\geq 1$ such that $t_j=s_{j-1}$ for all $1\leq j < n$ but $t_{n} \ne s_{n-1}$. 

If $E_\la^k\left(\mc\right)$ were unbounded to the left then, from Lemma \ref{negative-real-part}, we know that for all $\ell>0$ there exists a point $z\in E_\la^k\left(\mc\right)$ such that $r(z)=r_0r_1\ldots$ where $r_j=s_{j-1}$ for all $j=1\ldots \ell$. Consequently if $z\in E_\la^k\left(\mc\right)$ is chosen so that $\ell=n$ then $r_{n}\ne t_{n}$.  This would imply that $E_{\lambda}^{n}(w)$ and $E_{\lambda}^{n}(z)$ (both in $E_\lambda^{k+n}\left(\mc\right)$) belong to different $R_j$-strips, a contradiction with Lemma \ref{lemma:no-cuts} and the fact that $E_\lambda$ maps connected sets to connected sets.
\end{proof} 

\noindent Now we can prove the connectedness of the escaping set.

\begin{Theorem}\label{theorem:main}
Let $\lambda\in\bc$ be a Misiurewicz parameter. Then $\mi(E_\la)$ is a connected subset of the plane.
\end{Theorem}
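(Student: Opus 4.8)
The plan is to finish the proof by contradiction, deriving a contradiction with the assumption $\mi(E_\la)\cap\mc=\emptyset$ (that is, (\ref{eq:definition-U}b) applied to the common boundary $\mc=\partial V=\partial U_1$). All of the preparatory work is already in place: by Lemmas \ref{lemma:unbounded}--\ref{lemma:new-V} we have an unbounded, closed, connected boundary $\mc$ that carries no escaping points, while by Lemma \ref{lemma:no-cuts} each forward image $E_\la^k(\mc)$ lies entirely inside a single strip $R_{j_k}$.

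First I would record that $\mc$ has a \emph{constant} itinerary. Indeed, since $\mc$ is connected and $E_\la^k(\mc)\subset R_{j_k}$ for a single index $j_k$ (Lemma \ref{lemma:no-cuts}), every point of $\mc$ has one and the same itinerary $r=j_0j_1j_2\ldots$; by Lemma \ref{lemma:exp_bounded}(a) this sequence is exponentially bounded, so $2\pi|j_k|<E_{|\la|}^k(\hat x)$ for some $\hat x>0$ and all $k\ge 0$. Next, combining Lemma \ref{lemma:left} (applied with $k=0$) with the fact that $\mc$ is unbounded (Lemma \ref{lemma:new-V}), I conclude that $\mc$ is bounded to the left but unbounded to the right; in particular it contains points $z$ with ${\rm Re}(z)$ arbitrarily large.

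The core step is to show that such a point $z\in\mc$ with ${\rm Re}(z)$ large enough must in fact escape, contradicting $\mi(E_\la)\cap\mc=\emptyset$. Write $x_k={\rm Re}\left(E_\la^k(z)\right)$, so that $\left|E_\la^{k+1}(z)\right|=|\la|e^{x_k}$. By Lemma \ref{lemma:left} each iterate $E_\la^{k}(z)\in E_\la^k(\mc)$ satisfies $x_k>\rho(k)$, i.e.\ it is bounded to the left; and on the region $\{{\rm Re}>\rho(k)\}\cap R_{j_k}$ the imaginary part is bounded, since to the right $R_{j_k}$ is a thin strip asymptotic to ${\rm Im}=2\pi j_k$, while on the remaining (left-bounded) part the bound on $|{\rm Im}|$ is governed by $2\pi|j_k|$ together with the linear slope estimate (\ref{eq:exp_bounded}). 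Consequently, when $x_{k}$ is large the point $E_\la^{k+1}(z)$ has huge modulus $|\la|e^{x_k}$ but bounded imaginary part, so its real part is forced to be large and \emph{positive} --- the alternative ${\rm Re}\approx-|\la|e^{x_k}$ being excluded by $x_{k+1}>\rho(k+1)$. This yields $x_{k+1}\approx|\la|e^{x_k}$, and iterating gives $x_k\to\infty$, i.e.\ $z\in\mi(E_\la)$, the desired contradiction; hence $\mi(E_\la)$ is connected.

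The hard part will be to make this final iteration genuinely rigorous, since the obstructions at level $k$ --- the left-bound $\rho(k)$ and the imaginary-part bound, which grows like $2\pi|j_k|$ --- themselves depend on $k$ and may grow. The key is that exponential boundedness (Lemma \ref{lemma:exp_bounded}(a)) supplies exactly the right margin: choosing ${\rm Re}(z)>\hat x$ large forces $x_k$ to dominate $E_{|\la|}^k(\hat x)$, hence to outrun $2\pi|j_k|$ and the accompanying $O(1)$ strip width at every step, so that $\cos\bigl(\arg E_\la^{k+1}(z)\bigr)$ stays close to $+1$ and the real parts diverge. Verifying that a single threshold on ${\rm Re}(z)$ keeps this induction self-sustaining --- rather than letting the orbit fold back to the left --- is the one estimate that must be carried out with care.
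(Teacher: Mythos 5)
Your setup (constant, exponentially bounded itinerary on $\mc$; $\mc$ bounded to the left and unbounded to the right) matches the paper, but your core step diverges from it and contains a genuine gap. You track a single point $z\in\mc$ with ${\rm Re}(z)$ large and try to propagate the estimate $x_{k+1}\approx|\la|e^{x_k}$ by ruling out, at each step, the possibility that $E_\la^{k+1}(z)$ lands in the far-left part of $R_{j_{k+1}}$ (where ${\rm Re}\approx-|\la|e^{x_k}/\sqrt{1+c^2}$ is perfectly consistent with the modulus $|\la|e^{x_k}$ and with membership in the strip, because of the linear-slope behaviour (\ref{eq:exp_bounded})). The only tool you invoke to exclude this is Lemma \ref{lemma:left}, i.e.\ $x_{k+1}>\rho(k+1)$. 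But Lemma \ref{lemma:left} is purely qualitative: $\rho(k)$ is obtained by comparing the shifted itinerary of $\mc$ with that of $0$ up to its first disagreement, and nothing bounds how fast $|\rho(k)|$ may grow with $k$. Your induction needs $|\la|e^{x_k}\gtrsim|\rho(k+1)|$ for \emph{every} $k$ with a \emph{single} initial threshold on ${\rm Re}(z)$; since raising ${\rm Re}(z)$ only raises the tower $x_k$ by a bounded number of exponentiations, an a priori sequence $|\rho(k)|$ growing faster than any fixed tower defeats every choice of starting point. Exponential boundedness, which you cite as supplying ``exactly the right margin,'' controls only $2\pi|j_k|$ (the vertical position of the strips), not $\rho(k)$ (how far left $E_\la^k(\mc)$ reaches), so the estimate you defer to the end is not a matter of care --- it is not available from the stated lemmas.

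The paper's proof avoids this issue entirely by never following a forward orbit. It places boxes $B_{r_j}\subset R_{r_j}$ at real parts between $E_{|\la|}^j(\rho)$ and $E_{|\la|}^j(\rho)+2\pi$ --- always in the far-right region where exponential boundedness alone confines the strips to the sector $|{\rm Im}(z)|\le{\rm Re}(z)$ --- and proves the covering relation $B_{r_{j+1}}\subset E_\la\left(B_{r_j}\right)$. Each $E_\la^k(\mc)$, being connected, bounded to the left and unbounded to the right, meets $B_{r_k}$; pulling such points back through the conformal inverse branches yields points $z_0^k\in\mc\cap B_{r_0}$ whose first $k$ iterates lie in the successive boxes, and a limit point $z_0\in\mc\cap B_{r_0}$ then has ${\rm Re}\left(E_\la^k(z_0)\right)\ge E_{|\la|}^k(\rho)\to\infty$, the desired contradiction. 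If you want to salvage your single-orbit scheme, you would have to prove a quantitative version of Lemma \ref{lemma:left} (a uniform lower bound on $\rho(k)$ relative to $E_{|\la|}^k(\cdot)$), which is substantially harder than the compactness argument; otherwise I recommend switching to the covering/nested-compact-sets route.
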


\begin{proof}
From Lemma \ref{lemma:left}  we conclude that for each $k\geq 0$, $E_\la^k\left(\mc\right)$ cannot be  unbounded to the left. Since we also know that $\mc$ is unbounded and belongs to a certain $R_j, \ j\in\bz$, it must be unbounded to the right. The same applies to all its images $E_\la^k\left(\mc\right)$ (since each of them belong to a certain $R_{j(k)}$). 

We will show that under these hypotheses there must a point in $\mc$ whose orbit escapes, which is a contradiction. We know that all points in $\mc$ follow the same exponentially bounded itinerary, say $r=r_{0}r_{1}\dots$. Therefore,  there exists $\hat{x}>0$ such that $2\pi |r_{j}|<E_{|\la|}^j(\hat{x}),\ j\geq 0$.

From Proposition \ref{prop:0-accessible} we know that far enough to the right the components of  $E_{\lambda}^{-1}\left(\gamma\right)$ have an asymptotic constant imaginary part. Let $\rho\geq \hat{x}$ be such that   $\mc \cap \left({\rm Re}(z)=x\right)\neq\emptyset$. For all $j\geq 0$, let $B_{r_j}$ be the closed connected region in  $R_{r_{j}}$ that is bounded above and below by $\partial R_{r_{j}}$, and left  and right by Re$(z)=E_{|\la|}^j\left(\rho\right)$ and 
Re$(z)=E_{|\la|}^j\left(\rho\right)+2\pi$, respectively. Clearly  $E_\la\left(B_{r_{j}} \right)$ is an annulus and we claim that  $B_{r_{j+1}}\subset E_\la\left(B_{r_{j}} \right)$. For the case of the static partition of the plane, this claim is proven in \cite{V} (Lemma 2.4). A similar argument, which we include here for completeness, also works in our setting.  Each $B_{r_{j}}$ lies in the sector $|{\rm Im}(z)|\leq {\rm Re}(z)$ (this follows immediately from the inequality $2\pi |r_{j}|<E_{|\la|}^j\left(\rho\right),\ j\geq 0$) and that the outer circle of the annulus meets the lines $y=\pm x$ at points with real part $\left(\sqrt{2}/2\right) e^{2\pi}E_{|\la|}^{j+1}\left(\rho\right)$ which turns out to be larger than $E_{|\la|}^{j+1}\left(\rho\right)+2\pi$. Moreover $|E'(z)|>1$ for all $z$ with Re$(z)>\rho$. 

If $z_{1}$ is a point in $E_\la\left(\mc \right)\cap B_{r_{1}}$ then, by construction, there must be a point $z_{0}^1$ in $\mc \cap B_{r_{0}}$ such that $E_\la^{-1}\left(z_{1} \right)=z_{0}^1$. Similarly, if $z_{2}$ is a point in $E_\la^2\left(\mc \right)\cap B_{r_{2}}$ then  there must be a point $z_{1}^{\prime}$ in $E_\la\left(\mc \right)\cap B_{r_{1}}$ such that $E_\la^{-1}\left(z_{2} \right)=z_{1}^\prime$. So, there must be a point  $z_{0}^2$ in $\mc \cap B_{r_{0}}$ such that $E_\la^{-2}\left(z_{2} \right)=z_{0}^2$. By applying the same construction we can  construct a sequence of points $z_{0}^k$ in $\mc \cap B_{r_{0}}$ such that $E_\la^{k}\left(z_{0}^k \right)=z_{k}$. Each of these points has an orbit whose first $k$ iterates move to the right in the asymptotic direction. If we take 
$$
z_{0}=\lim\limits_{k\to \infty} z_{0}^k 
$$
then $z_{0}\in B_{r_{0}}\cap \mc$ (because $B_{r_{0}}$ is compact and $\mc$ is closed), and the orbit of $z_{0}$ escapes, since Re$\left(E_\la^k\left(z_{0}\right)\right) \to \infty$, a contradiction.
\end{proof}

During the refereeing process, Lasse Rempe,  using some of the ideas of a preprint  version of this paper, has extended the connectivity of the escaping set to other  parameter values of the exponential family.


\begin{thebibliography}{9}

\bibitem[Ba]{Ba} Bara\'nski, B. Trees and hairs for some hyperbolic
entire maps of finite order. \emph{Math. Z.} {\bf 257(1)} (2007), 
33--59.
\medskip
\bibitem[Be]{Be} Bergweiler, W. Iteration of meromorphic functions, \emph{Bulletin of the American Mathematical Society} {\bf 29(2)} (1993), 
151--188.
\medskip
\bibitem[B]{B} Bodel\'on, C., Devaney, R.~L., Goldberg, L., Hayes, M.,
 Hubbard, J. and Roberts, G. Hairs for the Complex Exponential
Family. {\it Int.~J.~Bifurcation and Chaos\/} {\bf 9} (1999),
1517-1534.
\medskip
\bibitem[CG]{CG} Carleson, L., Gamelin, T.W.  {\it Complex dynamics}, Universitext: Tracts in Mathematics,  Ed. Springer. 1993.
\medskip
\bibitem[D]{D}  Devaney, R.~L. Knaster-like Continua and Complex Dynamics.
{\sl Ergodic Theory and Dynamical Systems\/} {\bf 13} (1993), 627-634.
\medskip
\bibitem[DJ]{DJ} Devaney, R.~L.~and Jarque, X. Misiurewicz Points for
Complex Exponentials {\it Int. J. Bifurcation and Chaos\/} {\bf 7}
(1997),
1599-1616.
\medskip
\bibitem[DJM]{DJM} Devaney, R.~L.,  Jarque X., and Moreno-Rocha, M. Indecomposable continua and Misiurewicz points in exponential dynamics.  {\it Int. J. Bifurcation and Chaos\/} {\bf 15(10)},
(2005), 3281--3294.
\medskip
\bibitem[DK]{DK}  Devaney, R.~L. and Krych, M.  Dynamics of exp(z).
{\sl Ergodic Theory and Dynamical Systems\/} {\bf 4} (1984), 35-52.
\medskip
\bibitem[E]{E}  Eremenko, A. E.  On the iteration of entire functions. Dynamical systems and ergodic theory (Warsaw, 1986). Banach Center publications {\bf 23}.
\medskip
\bibitem[EL]{EL}  Eremenko, A. E. and  Lyubich, M. Y. Dynamical properties of some classes of entire functions. {\it Ann. Inst. Fourier (Grenoble)} {\bf 42(4)},
(1992), 989--1020.
\medskip
\bibitem[M-H]{M-B}  H. Mihaljevi\'{c}-Brand. Orbifolds of subhyperbolic transcendental maps. arXiv:0907.5398. 
\medskip
\bibitem[R]{R}  Rempe, L. The escaping set of the exponential.
{\it Ergodic Theory and Dynamical Systems\/}, to appear. arXiv:0812.1768.
\medskip
\bibitem[R1]{R1}  Rempe, L. Topological dynamics of exponential maps on their escaping set. {\it Ergodic Theory and Dynamical Systems\/}, {\bf 26},
(2006), 1939--1975. 
\medskip
\bibitem[R2]{R2}  Rempe, L. On nonlanding dynamic rays of exponential maps. {\it Annales Academiae  Scientiarum Fennicae Mathematica} {\bf 32},
(2007), 353--369.
\medskip
\bibitem[R3]{R3}
Rempe, L.  Rigidity of escaping dynamics for transcendental entire
maps. {\it Acta Mathematica}, to appear.
\medskip
\bibitem[R3S]{R3S}
Rottenfusser, G., R\"{u}ckert, J.,  Rempe, L.  and Schleicher, D. Dynamics rays of bounded-type entire functions. {\it Annals of Mathematics}, to appear.
\medskip
\bibitem[SZ]{SZ}  Schleicher, D. and Zimmer, J.  Escaping Points of Exponential Maps.
{\it Journal of the London Mathematical Society} (2) 67 (2003), 380-400.
\medskip
\bibitem[SZ1]{SZ1}  Schleicher, D. and Zimmer, J.  Periodic points and dynamics rays of exponential maps, {\it Annales Academiae  Scientiarum Fennicae Mathematica} {\bf 28},
(2003), 327--354.
\medskip
\bibitem[V]{V} Ville, R.L.  Itineraries of entire functions, {\it Journal of Difference Equations and Applications} {\bf 7},
(2001), 193--214.
\medskip
\bibitem[W]{W} Whyburn R.L.  {\it Analytic Topology}, American Mathematical Society, 1942.




\end{thebibliography}
\end{document}